\theoremstyle{plain}
   \newtheorem{theorem}{Theorem}
   \newtheorem{lemma}[theorem]{Lemma}
\theoremstyle{definition}
\theoremstyle{remark}
   \newtheorem{remark}{Remark}
\thanks{PB is a Royal Swedish Academy of Sciences Research Fellow
  supported by a grant from the Knut and Alice Wallenberg
  Foundation.}
\newcommand{\HH}{\mathcal{H}}
\newcommand{\RR}{\mathbb{R}}
\newcommand{\CC}{\mathbb{C}}
\newcommand{\cZ}{\mathcal{Z}}
\newcommand {\pr} {\prec}
\def\newop#1{\expandafter\def\csname #1\endcsname{\mathop{\rm
#1}\nolimits}}
\begin{document}




\begin{center}
\textbf{\textsc{ \Large  hyperbolicity preservers and majorization}}\\[2ex]
\textbf{\textsc{\em  \large  Pr\'eservateurs d'hyperbolicit\'e et majorisation}}\\[3ex]
{\small \sc Julius Borcea and Petter Br\"and\'en\footnote{Corresponding author}\footnote{PB is a Royal Swedish Academy of Sciences Research Fellow
  supported by a grant from the Knut and Alice Wallenberg
  Foundation.}} \\[1ex]
{\footnotesize Department of Mathematics, Stockholm University, SE-106 91 Sweden. \\ \emph{Email:} \url{pbranden@math.su.se}\\ 
\emph{Published as: } C. R. Math. Acad. Sci. Paris {\bf 348} (2010), 843--846.}\\[3ex] 

 \parbox[c]{11cm}{ \footnotesize 
 {\sc R\'esum\'e.} L'ordre de majorisation de $\RR^n$ induit un ordre partiel naturel sur
l'espace des polyn\^omes hyperboliques univari\'es de degr\'e $n$. Nous
caract\'erisons les op\'erateurs lin\'eaires sur ces polyn\^omes pr\'eservant l'ordre
donn\'e et montrons que seule la pr\'eservation de l'hyperbolicit\'e suffit (modulo
des contraintes \'evidentes sur le degr\'e).\\[4ex]
{\sc Abstract.} The majorization order on $\RR^n$ induces a natural partial ordering on the space of univariate hyperbolic polynomials of degree $n$.  We characterize all linear operators on polynomials that preserve majorization, and show that it is sufficient (modulo obvious degree constraints) to preserve hyperbolicity.  }\\[6ex]

\end{center}

\section{Introduction and main result}
A polynomial in $\RR[z]$ is \emph{hyperbolic} if it has only real zeros. 
The space $\HH_n$ of all hyperbolic polynomials of degree $n$ is equipped with a natural partial ordering defined in terms of the majorization order on weakly increasing vectors in  $\RR^n$. If $x=(x_1,\ldots, x_n)$ and $y = (y_1,\ldots, y_n)$ are weakly increasing vectors in $\RR^n$, then $y$ \emph{majorizes} $x$ (denoted $x \pr y$) if   $\sum_{i=1}^{n}x_{i}=\sum_{i=1}^{n}y_{i}$, and 
$\sum_{i=0}^{k}x_{n-i}\le \sum_{i=0}^{k}y_{n-i}$ for each $0\le k\le n-2$. Given a polynomial $p\in\HH_n$ arrange the zeros (counting multiplicities) of $p$ in a weakly increasing vector 
$
\cZ(p)\in\RR^n.
$
 If 
$p, q\in\HH_n$  we 
say that $p$ {\em is majorized by} $q$, 
denoted $p\pr q$, 
if $p$ and $q$ have the same leading coefficient and $\cZ(p)\pr \cZ(q)$.  In particular if $p \pr q$, then the top two coefficients of $p$ and $q$ are the same. The majorization order on $\HH_n$ was  studied  in 
\cite{BS,Bo1,Bo2,Per2,Per}. Particular interest has been given to the question of which linear operators on polynomials preserve majorization. 
The purpose of this note is to characterize such operators. 

Let $\RR_n[z]$ be the linear space of all real polynomials of degree at most $n$. 
A linear operator $T:\RR_n[z]\to \RR[z]$ {\em preserves majorization} 
if $T(p)\pr T(q)$ whenever $p,q\in \HH_n$ are such that $p\pr q$. Recall that two hyperbolic polynomials have \emph{interlacing zeros} if 
$$
x_1 \leq y_1 \leq x_2 \leq y_2 \leq \cdots \quad \mbox{ or } \quad y_1 \leq x_1 \leq y_2 \leq x_2 \leq \cdots, 
$$
where $x_1 \leq x_2 \leq \cdots$ and $y_1 \leq y_2 \leq \cdots$ are the zeros of $p$ and $q$, respectively.  We say that a polynomial $p(z_1,\ldots, z_n) \in \CC[z_1,\ldots, z_n]$ is \emph{stable} if it is nonzero whenever all variables have positive imaginary parts. A linear operator $T: \RR_n[z] \rightarrow \RR[z]$ is called \emph{degenerate} if $\dim(T(\RR_n[z])) \leq 2$. The \emph{symbol} of a linear operator 
$T: \RR_n[z] \rightarrow \RR[z]$ is the bivariate polynomial $F_T(z,w)=  \sum_{k=0}^n \binom n k T(z^k)w^{n-k}$. 
The following theorem is our main result and will be proved in the next section. 
\begin{theorem}\label{main}
Suppose that $T : \RR_n[z] \rightarrow \RR[z]$ is a linear operator, where $n \geq 1$. Then $T$ preserves  majorization if and only if 
\begin{enumerate}
\item $T$ is nondegenerate and $T(\HH_n) \subseteq \HH_m$ for some $m$, or 
\item $T$ is of the form $T(\sum_{k=0}^n a_k z^k)= a_nT(z^n)+a_{n-1}T(z^{n-1})$,  where  $T(z^n) \not \equiv 0$ is hyperbolic, and either $T(z^{n-1}) \equiv 0$ or $T(z^{n-1})$ is a hyperbolic polynomial which is not a constant multiple of $T(z^{n})$, and $T(z^{n-1})$  and $T(z^n)$ have interlacing zeros.  
\end{enumerate}
Moreover, condition (1) is equivalent to that $T$ is nondegenerate, and $F_T(z,w)$ or $F_T(z,-w)$  is stable and such that 
$\deg(T(z^n)) > \deg(T(z^k))$ for all $k <n$. 
\end{theorem}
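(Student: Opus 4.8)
The plan is to prove the biconditional together with the ``Moreover'' equivalence, proving the latter first and then using it freely. Throughout, for a polynomial $f$ write $[z^k]f$ for its coefficient of $z^k$, $\mathrm{lc}$ for the leading coefficient and $\mathrm{tr}$ for the sum of the zeros. The easy half of (2) is immediate: if $T$ has the form in (2) and $p\pr q$ in $\HH_n$, then $p$ and $q$ share their top two coefficients, so $T(p)=a_nT(z^n)+a_{n-1}T(z^{n-1})=T(q)$, which is hyperbolic by the interlacing hypothesis, and $T(p)\pr T(q)$ holds by reflexivity.

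For the ``Moreover'' equivalence I would use two inputs: the Borcea--Br\"and\'en characterization of linear operators preserving real-rootedness (for nondegenerate $T$: $T$ preserves real-rootedness iff $F_T(z,w)$ or $F_T(z,-w)$ is stable), and the elementary fact that a linear functional on $\RR_n[z]$ not vanishing on $\HH_n$ is a nonzero multiple of the leading-coefficient functional (proof: normalize $\ell(z^n)=1$, let $j<n$ be maximal with $\ell(z^j)\ne0$, and observe that $\ell$ kills the explicit polynomial $z^j\prod_i(z-a_i)\in\HH_n$ for a suitable real choice of the $a_i$). Given (1), the functional $p\mapsto[z^m]T(p)$ does not vanish on $\HH_n$ (images have degree exactly $m$) and $\HH_n$ spans $\RR_n[z]$, so that functional is $\lambda\cdot\mathrm{lc}$ with $\lambda\ne0$; hence $\deg T(z^n)=m>\deg T(z^k)$ for $k<n$, and the characterization gives stability of $F_T(z,\pm w)$. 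Conversely, stability of $F_T(z,\pm w)$ gives real-rootedness preservation, and the degree condition upgrades this to $T(\HH_n)\subseteq\HH_m$ with $m=\deg T(z^n)$. Finally, since the leading $z$-coefficient of the stable polynomial $F_T(z,w)$ is the constant $\lambda=[z^m]T(z^n)$, the sum of the $z$-zeros of $F_T(\cdot,w)$ lies in the closed lower half-plane for $\Im w>0$, so its $z^{m-1}$-coefficient is a polynomial $g(w)$ mapping the upper half-plane into the closed upper half-plane; hence $\deg g\le1$, so $[z^{m-1}]T(z^k)=0$ and thus $\deg T(z^k)\le m-2$ for all $k\le n-2$. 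I record this \emph{degree structure}: $\deg T(z^n)=m$, $\deg T(z^{n-1})\le m-1$, $\deg T(z^k)\le m-2$ for $k\le n-2$.

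Assume now that $T$ preserves majorization. For each trace $t$ the slice $\{q\in\HH_n:\mathrm{lc}(q)=1,\ \mathrm{tr}(q)=t\}$ has nonempty interior and least element $(z-t/n)^n$, so the differences $q-(z-t/n)^n$ span $\RR_{n-2}[z]$; since $p\pr q$ forces $T(p)$ and $T(q)$ to have equal leading coefficient and equal trace, the functionals $[z^m]T(\cdot)$ and $[z^{m-1}]T(\cdot)$ annihilate $\RR_{n-2}[z]$, and short arguments with the polynomials $z^{n-1}(z-a)$ and $z^{n-2}(z-a)(z-b)$ give $[z^{m-1}]T(z^k)=0$ for $k\le n-2$ and show that either $[z^m]T(z^{n-1})\ne0$, in which case $T(z^k)\not\equiv0$ for some $k\le n-2$ would make some trace-constant image acquire a zero escaping to infinity — impossible — so $T$ has the form in (2); or $\deg T(z^n)=m>\deg T(z^k)$ for $k<n$, whence $T(\HH_n)\subseteq\HH_m$ (images are real-rooted by $T(p)\pr T(p)$ and have degree exactly $m$). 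In the latter case a case-split on $\dim T(\RR_n[z])$ finishes: if $T$ is degenerate, a pencil through the degree-$m$ polynomial $T(z^n)$ stays hyperbolic only if its other generator interlaces $T(z^n)$, which is impossible when that generator has degree $\le m-2$; this forces $T(z^k)\equiv0$ for $k\le n-2$ and hence the form in (2). If $T$ is nondegenerate we are in (1).

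It remains to show that (1) implies majorization-preservation; by the above we may use the degree structure. If $p\pr q$ in $\HH_n$ then $p-q\in\RR_{n-2}[z]$, so $T(p)-T(q)=T(p-q)$ has degree $\le m-2$ and $T(p),T(q)\in\HH_m$ share leading coefficient and trace; it remains to prove $\cZ(T(p))\pr\cZ(T(q))$. By the Hardy--Littlewood--P\'olya/Muirhead theorem it suffices to treat $q=r\cdot(z-a)(z-b)$, $p=r\cdot(z-a')(z-b')$ differing by one Robin Hood transform ($a<a'\le b'<b$, $a+b=a'+b'$, $r\in\HH_{n-2}$). Then $(z-a)(z-b)+t$ is hyperbolic for $t\in[0,c]$ with $c=a'b'-ab>0$, so $t\mapsto T(q)+tT(r)=T\bigl(r\cdot[(z-a)(z-b)+t]\bigr)$ is an affine, trace-constant path inside $\HH_m$ from $T(q)$ to $T(p)$ along which the two moving quadratic zeros contract to their mean; writing $S(h):=T(rh)$ for $h\in\RR_2[z]$ gives a hyperbolicity preserver of degree $\le2$, and everything reduces to: \emph{such an $S$ sends the contracting family $z^2-sz+c$ (with $c\uparrow s^2/4$) to a family of images whose zero vectors decrease in majorization.} This degree-$2$ statement is the crux, and the main obstacle. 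I would attack it through the polarized symbol $\widetilde F_S(z,w_1,w_2)=S(z^2)+(w_1+w_2)S(z)+w_1w_2S(1)$, which is stable by the coincidence theorem of Grace--Walsh--Szeg\H{o}: fixing one of $w_1,w_2$ at a real value makes $\widetilde F_S$ a stable pencil in the other variable, so the $z$-zeros move monotonically under each single-zero motion of the quadratic, and the contraction is a composition of two such motions whose net effect fixes the trace. The delicate point is upgrading ``monotone in each variable'' to monotonicity of \emph{every} partial sum of zeros along the simultaneous two-variable contraction, since the naive decomposition changes the trace in the intermediate step; a magnitude estimate, or a direct analysis of the zeros of $A(z)+cS(1)(z)$ as functions of $c$ on the hyperbolic range, is what is needed. (An alternative route, following the style of the cited works \cite{BS,Bo1,Bo2,Per,Per2}, is to show that operators satisfying (1) are limits of compositions of the elementary majorization-preservers — multiplication by a real linear factor, differentiation, rescaling — each of which preserves majorization by inspection.)
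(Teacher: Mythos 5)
Your reduction is correct in outline and closely parallels the paper's: the degenerate case via Obreshkoff, the degree structure from stability of the symbol, the Hardy--Littlewood--P\'olya pinch decomposition, and the observation that everything comes down to showing that a hyperbolicity preserver sends the one-parameter contracting family $r(z)\bigl((z+s)^2-t^2\bigr)$, $t\downarrow 0$, to a family whose zero vectors decrease in majorization. That reduction matches the paper's structure. The problem is that you stop precisely at the load-bearing step and acknowledge you do not have it: ``This degree-$2$ statement is the crux, and the main obstacle \ldots a magnitude estimate \ldots is what is needed.''

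The Grace--Walsh--Szeg\H{o} polarization route you sketch cannot be pushed through as stated. Stability of the polarized symbol in each $w_i$ separately gives monotone motion of the $z$-zeros under \emph{one-parameter} perturbations that change the trace; to compare the endpoints of the trace-preserving two-parameter contraction one needs control on the partial sums $\sum_{i\le k}x_{m+1-i}(t)$ of the \emph{ordered} zeros, and ``each zero moves monotonically in each variable'' does not give this (the ordering can permute along the path, and a difference of two one-variable monotone motions need not be monotone). What the paper does instead is pass to the three-variable homogeneous polynomial $f(z_1,z_2,z_3)=z_3^m g(z_1/z_3)-z_2^2 z_3^{m-2}h(z_1/z_3)$ with $g=T\bigl(r(z)(z+s)^2\bigr)$, $h=T(r)$, show it is hyperbolic in direction $(1,0,0)$ (using the sign-consistency of the multiplier sequence $\gamma_k(T)$), and then invoke the resolution of the Lax conjecture (Helton--Vinnikov, Lewis--Parrilo--Ramana) to write $f(z_1,z_2,z_3)=f(e)\det(z_1 I - z_2 B - z_3 C)$ with $B,C$ real symmetric. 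The zeros of $T\bigl(r(z)((z+s)^2-t^2)\bigr)$ are then the eigenvalues of $tB+C$, and Ky Fan's theorem on the convexity of $A\mapsto\sum_{i\le k}\lambda_{n+1-i}(A)$ on Hermitian matrices gives exactly the required monotonicity of every partial sum along the affine path. This determinantal-representation-plus-Fan argument is the missing idea; without it (or an equivalent replacement) your proof of the forward implication in (1) is incomplete. The alternative you mention in parentheses (density of compositions of elementary majorization preservers) is not developed and is not the paper's route either.

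One smaller remark: your degree analysis ($[z^{m-1}]F_T$ is an affine Herglotz-type function of $w$, hence $[z^{m-1}]T(z^k)=0$ for $k\le n-2$) is a clean way to recover what the paper extracts from the jump-system structure of the support of a stable polynomial (Lemma~\ref{degrees}), and it is essentially correct; but it is peripheral to the gap above.
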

Theorem \ref{main} complements \cite{BB1} where the authors characterized all linear operators on polynomials preserving hyperbolicity. 
Also, Theorem \ref{main} answers in the affirmative several questions raised in \cite{Bo1,Bo2}.

\section{Proof of Theorem \ref{main}}
We will use the algebraic characterization of hyperbolicity preservers obtained in \cite{BB1}:
\begin{theorem}\label{alg}
Suppose that $T : \RR_n[z] \rightarrow \RR[z]$ is a linear operator, where $n \geq 1$. Then $T$ preserves  hyperbolicity if and only if 
\begin{itemize}
\item $T$ is degenerate and is of the form 
$$
T(p)= \alpha(p)P+\beta(p)Q,
$$
where $\alpha,\beta : \RR_n[z] \rightarrow \RR$ are linear functionals and $P,Q$ are hyperbolic polynomials with interlacing zeros, or 
\item $T$ is nondegenerate and $F_T(z,w)$ is stable, or 
\item $T$ is nondegenerate and $F_T(z,-w)$ is stable. 
\end{itemize}
\end{theorem}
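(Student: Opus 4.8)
The plan is to prove both directions, treating the degenerate and nondegenerate cases separately, with the Grace--Walsh--Szeg\H{o} coincidence theorem as the bridge between $T$ and its symbol. Two reductions are used throughout. First, by Hurwitz's theorem together with a limiting argument sending extra zeros to infinity (any hyperbolic polynomial of degree $k<n$ is a coefficientwise limit of scalar multiples of products $\prod_{i=1}^n(z-a_i)$, $a\in\RR^n$), $T$ preserves hyperbolicity if and only if $T\bigl(\prod_{i=1}^n(z-a_i)\bigr)$ is hyperbolic or $\equiv0$ for every $a\in\RR^n$. Second, introduce the polarization $\Phi(z,y_1,\dots,y_n):=T_z\bigl[\prod_{i=1}^n(z+y_i)\bigr]=\sum_{k=0}^n T(z^k)\,e_{n-k}(y_1,\dots,y_n)$, where $T$ acts in the variable $z$ and $e_j$ denotes the $j$-th elementary symmetric polynomial; $\Phi$ is multiaffine and symmetric in $y_1,\dots,y_n$ and satisfies $\Phi(z,w,\dots,w)=F_T(z,w)$. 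Applying Grace--Walsh--Szeg\H{o} to the block $y_1,\dots,y_n$ over the open upper half-plane $\bH$ shows that for each fixed $z_0\in\bH$ the multiaffine symmetric polynomial $\Phi(z_0,\cdot)$ vanishes somewhere in $\bH^n$ exactly when its diagonal restriction $F_T(z_0,\cdot)$ vanishes somewhere in $\bH$; hence $\Phi$ is stable if and only if $F_T(z,w)$ is stable, and likewise $\Phi(z,-y_1,\dots,-y_n)$ is stable if and only if $F_T(z,-w)$ is. Combining the two reductions, $T$ preserves hyperbolicity if and only if $\Phi(z,a)$ is hyperbolic in $z$ for all $a\in\RR^n$.

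\emph{Sufficiency.} If $T$ is degenerate of the displayed form $T(p)=\alpha(p)P+\beta(p)Q$ with $P,Q$ hyperbolic and interlacing, then by the Hermite--Kakeya--Obreschkoff theorem every real combination $\alpha P+\beta Q$ is hyperbolic, so $T$ maps each hyperbolic polynomial to a hyperbolic polynomial or to $0$. If $T$ is nondegenerate and $F_T(z,w)$ is stable, the equivalence above makes $\Phi$ stable, hence each real specialization $\Phi(z,a)$ is real stable in one variable, i.e.\ hyperbolic or $\equiv0$; so $T$ preserves hyperbolicity. The case $F_T(z,-w)$ stable reduces to this one, since $F_T(z,-w)=F_S(z,w)$ where $S(p)(z):=(-1)^nT(p(-z))$ is nondegenerate exactly when $T$ is and preserves hyperbolicity exactly when $T$ does.

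\emph{Necessity.} Suppose $T$ preserves hyperbolicity, so $\Phi(z,a)$ is hyperbolic for all $a\in\RR^n$. If $T$ is degenerate, observe that $T$ carries interlacing pencils to interlacing pencils: if $p,q\in\RR_n[z]$ are hyperbolic with interlacing zeros then $\{p+tq:t\in\RR\}$ consists of hyperbolic polynomials, so $\{T(p)+tT(q):t\in\RR\}$ does too, whence by the converse of Hermite--Kakeya--Obreschkoff either $T(p),T(q)$ are proportional or they are hyperbolic with interlacing zeros. When $\dim T(\RR_n[z])=2$, fix a generic $p_0\in\HH_n$ with simple zeros and $T(p_0)\ne0$; the set of polynomials interlacing $p_0$ has nonempty interior in $\RR_n[z]$, hence spans it, so some interlacer $q_0$ of $p_0$ has $T(q_0)$ not proportional to $T(p_0)$, and then $P:=T(p_0)$, $Q:=T(q_0)$ are hyperbolic with interlacing zeros and span the image of $T$, giving $T(p)=\alpha(p)P+\beta(p)Q$. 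The cases $\dim T(\RR_n[z])\le1$ are similar and easier. If $T$ is nondegenerate, one must deduce from ``$\Phi(z,a)$ hyperbolic for every $a\in\RR^n$'' that $\Phi$, or $\Phi(z,-y_1,\dots,-y_n)$, is stable; the Grace--Walsh--Szeg\H{o} equivalence then transfers this to $F_T(z,w)$ or $F_T(z,-w)$.

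\emph{Main obstacle.} The crux is precisely this last implication for nondegenerate $T$. Sectionwise hyperbolicity is strictly weaker than stability---already $z^2-y_1y_2$ has hyperbolic real sections but is neither stable nor stable after reflecting the $y_i$ (and its associated operator is degenerate)---so one must exploit the mixed-sign evaluations $a\in\RR^n$ to control how the real zeros of the sections move, promote this to non-vanishing of $\Phi$ on $\bH^{n+1}$ or on $\bH\times(-\bH)^n$, and separately verify that the low-rank polynomials for which this promotion fails are exactly those whose operators fall into the degenerate branch; this is where the multivariate Hermite--Biehler/stability machinery must be brought in. The remaining ingredients---the Grace--Walsh--Szeg\H{o} reduction, the Hurwitz degree-drop argument, the converse Hermite--Kakeya--Obreschkoff theorem, and the elementary fact that interlacing pencils of hyperbolic polynomials are hyperbolic---are routine by comparison.
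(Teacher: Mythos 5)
This theorem is not proved in the present note: it is imported wholesale from \cite{BB1}, so the relevant comparison is with the proof given there. Your scaffolding --- the Hurwitz reduction to products $\prod_{i}(z-a_i)$, the symmetric multiaffine polarization $\Phi$ with $\Phi(z,w,\dots,w)=F_T(z,w)$, the Grace--Walsh--Szeg\H{o} transfer of stability between $\Phi$ and $F_T$, the Hermite--Kakeya--Obreshkoff treatment of the degenerate case, and the reflection $S(p)=(-1)^nT(p(-\cdot))$ reducing $F_T(z,-w)$ to $F_T(z,w)$ --- is essentially the framework actually used in \cite{BB1}, and those steps are correct as far as they go.

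However, the proposal is not a proof: the necessity direction for nondegenerate $T$, i.e.\ the implication ``$T$ preserves hyperbolicity $\Rightarrow$ $F_T(z,w)$ or $F_T(z,-w)$ is stable'', is the entire substance of the theorem, and you do not establish it --- you explicitly label it the ``main obstacle'' and stop there. As your own example $z^2-y_1y_2$ shows, hyperbolicity of all real sections of $\Phi$ is genuinely weaker than stability, so a new idea is unavoidable. The missing ingredient in \cite{BB1} is the Hermite--Biehler theorem used in both directions: for $v\in\bH^n$ the univariate polynomial $\prod_j(z+v_j)$ is stable, hence its real and imaginary parts form a hyperbolic pair in proper position; a nondegenerate hyperbolicity preserver sends pairs in proper position to pairs in proper position with a \emph{consistent} orientation (this is exactly where the dichotomy between $F_T(z,w)$ and $F_T(z,-w)$ arises, and where degeneracy must be excluded); applying Hermite--Biehler in reverse then shows that $\Phi(z,v)\neq 0$ for all $z\in\bH$ and $v\in\bH^n$, i.e.\ $\Phi$ is stable, and Grace--Walsh--Szeg\H{o} transfers this to $F_T$. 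Without carrying out this step (including the orientation analysis that separates the two stable cases from the degenerate branch), the proposal proves only the sufficiency direction and the degenerate case, which are the routine parts.
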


Suppose first that $T$ is degenerate. If $T$ is as in (2) of Theorem \ref{main}, then $T$ preserves hyperbolicity by Obreshkoff's theorem, see e.g. \cite[Theorem 10]{BB1}. Also, $T(p) = T(q)$ whenever $p \pr q$ which proves that (2) is sufficient. Note that if $p=\sum_{k=0}^na_kz^k \in \HH_n$, then 
$a_n(z + a_{n-1}/na_n)^n \pr p$. Hence if $T$  preserves majorization, then the degree and the top two coefficients of $T(f)$ only depend on the top two coefficients of $p$. Since $T$ is of the form $T(p)= \alpha(p)P +\beta(p)Q$, where $\alpha$ and  $\beta$ are functionals (by Theorem \ref{alg}) it is not hard to see that $T$ has to be of the form (2). Henceforth, we assume that $T$ is nondegenerate.  We start by proving that (1) is sufficient.

\begin{lemma}\label{degrees}
Suppose that $T : \RR_n[z] \rightarrow \RR[z]$ is a nondegenerate linear operator preserving hyperbolicity. Then there are numbers $0 \leq K \leq L \leq M \leq N \leq n$ such that 
\begin{enumerate}
\item $T(z^k)\equiv 0$ if   $k < K$ or $k>N$; 
\item $\deg(T(z^{k+1}))= \deg(T(z^{k}))+1$ for all $K \leq k <L$; 
\item $\deg(T(z^{k}))\leq  \deg(T(z^{L}))=\deg(T(z^{M}))$ for all $L \leq k \leq M$, and
\item $\deg(T(z^{k+1}))= \deg(T(z^{k}))-1$ for all $M \leq k <N$.
\end{enumerate} 
\end{lemma}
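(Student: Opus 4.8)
The plan is to exploit the characterization in Theorem \ref{alg}: since $T$ is nondegenerate and preserves hyperbolicity, either $F_T(z,w)$ or $F_T(z,-w)$ is stable; by replacing $T(p)(z)$ with $T(p)(-z)$ if necessary (which does not affect any of the degree statements), we may assume $F_T(z,w)$ is stable. The key structural fact about a stable polynomial $F(z,w) = \sum_{k=0}^n \binom{n}{k} T(z^k) w^{n-k}$ is that for fixed real $w$ the ``section'' $z \mapsto F(z,w)$ is hyperbolic (or identically zero), and more importantly the coefficient sequence of a stable polynomial has no internal zeros in a suitable sense: stability implies that $T(z^k) \equiv 0$ can only happen at the two ends of the range $0,\dots,n$, giving the indices $K$ and $N$ of item (1). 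This is the first step.

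Next I would track the degree sequence $d_k := \deg(T(z^k))$ for $K \leq k \leq N$. For a bivariate stable polynomial, the Newton polygon is very constrained: writing $F_T(z,w)$ and examining the degrees, stability forces the upper boundary of the Newton polygon (in the $(k, d_k)$ plane, where $k$ records the power of $z$ implicit in $T(z^k)$ and $d_k$ its $z$-degree) to be \emph{concave}. Concretely, $d_{k+1} \le d_k + 1$ always (since $T$ maps $\RR_n[z]$ somewhere and degrees can jump up by at most one in a stable family — this follows from the fact that $\partial^j/\partial z^j$ and multiplication by $w$ preserve stability, combined with the interlacing of consecutive sections), and $d_{k+1} - d_k$ is nonincreasing in $k$. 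A nonincreasing integer sequence of ``slopes'' taking values in $\{+1, 0, -1, \dots\}$ near the top means: it starts equal to $+1$ (an initial run), then is $0$ on a middle plateau, then becomes negative; but a stable polynomial cannot have slope strictly less than $-1$ on its upper Newton boundary either (again by the symmetry $F_T(z,w) \leftrightarrow$ reversal, or directly from stability of the reversed polynomial), so the slopes are exactly $+1$ on $[K,L)$, $0$ on a set whose convex hull gives $[L,M]$ with a common maximal degree, and $-1$ on $(M,N]$. Defining $L$ as the largest index where $d_L$ attains the maximum and $d$ has been strictly increasing, and $M$ as the largest index attaining that same maximum, yields items (2), (3), (4).

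The main obstacle I anticipate is making the ``concavity of the Newton polygon'' argument rigorous: one must show not merely that the top-degree coefficients behave well, but that no degree in the middle range exceeds $d_L = d_M$ (the inequality $\deg(T(z^k)) \le \deg(T(z^L))$ for $L \le k \le M$ in item (3)), which is where genuine bivariate stability — as opposed to section-wise hyperbolicity — is needed. I would handle this by applying a homogenization/specialization trick: consider $F_T(z, tw + s)$ for real parameters, or pass to the polynomial $\sum_k \binom{n}{k} T(z^k) w^{n-k}$ and use that stability is preserved under the substitution $w \mapsto w + \lambda$ and under differentiation, then read off the leading behavior in $z$ of these transformed polynomials; the hyperbolicity of every real section $F_T(\cdot, w)$, together with continuity of roots, pins down that the $z$-degree function cannot have an interior strict local maximum above the plateau level. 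Finally I would note that the whole argument is symmetric under the reversal $z^k \mapsto z^{n-k}$ composed with $T$, which interchanges the roles of the ``up'' run and the ``down'' run, so items (2) and (4) are really one statement proved twice.
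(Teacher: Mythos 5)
The central claim of your argument---that the slope sequence $d_{k+1}-d_k$ (with $d_k:=\deg T(z^k)$) is nonincreasing, i.e.\ that the degree sequence is concave---is false, and the lemma as stated is careful \emph{not} to assert it. Reread item (3): on the middle interval $[L,M]$ the lemma only claims $d_k\le d_L=d_M$, which permits genuine dips. A concrete nondegenerate hyperbolicity preserver with a dipping degree sequence is obtained by choosing $T:\RR_3[z]\to\RR[z]$ with symbol
$F_T(z,w)=(w-1)\bigl(z(w+1)-c\bigr)(z+w)=zw^3+(z^2-c)w^2+\bigl(c-(c+1)z\bigr)w+(cz-z^2)$, $c>0$,
which is stable as a product of stable linear factors. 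Then $T(1)=z$, $T(z)=\tfrac13(z^2-c)$, $T(z^2)=\tfrac13\bigl(c-(c+1)z\bigr)$, $T(z^3)=cz-z^2$, so $(d_0,d_1,d_2,d_3)=(1,2,1,2)$: the slopes are $+1,-1,+1$, which is not monotone, and $T$ is nondegenerate since $1,z,z^2$ all lie in its image. The lemma is satisfied here with $K=0$, $L=1$, $M=N=3$. So your ``upper Newton boundary is concave'' step cannot be salvaged; whatever you deduce downstream from that monotonicity (in particular that the slopes form a run of $+1$'s, then $0$'s, then $-1$'s) proves a strictly stronger, false statement.

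What is actually true, and what the paper uses, is that the support of the bivariate stable polynomial $F_T$ is a \emph{jump system} (\cite[Theorem~3.2]{Br1}); the lemma is then read off directly from the jump-system axioms, which constrain the support tightly near the extremes (forcing the $\pm1$ steps on the outer runs $[K,L)$ and $[M,N)$ and forcing the nonvanishing interval in item (1)), while allowing exactly the sort of interior dips the example exhibits. Your other two ingredients---no internal zeros in $k$, and the two-sided Lipschitz bound $|d_{k+1}-d_k|\le 1$ at the boundary runs---are consequences of the jump-system structure but are only gestured at here (``this follows from the fact that $\partial/\partial z$ and multiplication by $w$ preserve stability''); as written they are not proofs. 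I would replace the whole Newton-polygon heuristic with the jump-system citation, which is also the paper's entire proof.
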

\begin{proof}
By Theorem \ref{alg}, either  $F_T(z,w)$ or $F_T(z,-w)$  is stable.  The lemma is a simple consequence of the fact that the support of a stable polynomial is a jump system, see \cite[Theorem~3.2]{Br1}. 
\end{proof} 
\begin{remark}\label{nd}
Suppose that $T : \RR_n[z] \rightarrow \RR[z]$ is a nondegenerate linear operator such that $T(\HH_n) \subseteq \HH_m$. Since any hyperbolic polynomial of degree at most $n$ is the limit of degree $n$ polynomials, it follows from Hurwitz' theorem on the continuity of zeros that $T$ preserves hyperbolicity. But then 
$L=M=N=n$, since otherwise one could produce two polynomials $p, q\in \HH_n$ such that $\deg(T(p)) \neq \deg(T(q))$. 
\end{remark}

 To any nondegenerate hyperbolicity preserver, we associate a sequence 
$\{\gamma_k(T)\}_{k=0}^n$ by defining $\gamma_k(T)$ to be the coefficient in front of $z^{r+k}$ in $T(z^{k})$, where $r=\deg (T(z^K))-K$ and $K$ is as in Lemma \ref{degrees}. We claim that the linear operator 
$\Gamma : \RR_n[z] \rightarrow \RR[z]$ defined by $\Gamma(z^k)= \gamma_k(T)z^k$ preserves hyperbolicity. Indeed, 
$$
\Gamma(p(z))= \lim_{\rho \rightarrow 0} (\rho/z)^r T(p(\rho z))(z/\rho),
$$
so the claim follows from Hurwitz' theorem. 
\begin{remark}\label{multsign}
It is known that such sequences have either constant sign or are alternating in sign, and that the indices $k$ for which $\gamma_k(T) \neq 0$ form an interval, see e.g. \cite[Theorem 3.4]{CC1}. 
\end{remark}

To prove Theorem \ref{main} we will  use an important result on hyperbolic polynomials in several variables. A  
homogeneous polynomial $p\in\RR[z_1,\ldots, z_n]$  is said to be {\em hyperbolic} with respect to a 
vector $e \in \RR^n$ if $p(e) \neq 0$ and for all vectors $\alpha \in \RR^n$  the polynomial $p(\alpha + et)\in\RR[t]$ has only real zeros. The following theorem,  proved by Lewis--Parrilo--Ramana based heavily on the work of Dubrovin, Helton--Vinnikov and  Vinnikov, settled the so called Lax conjecture. 

\begin{theorem}[\cite{HV,LPR}]\label{lax}
Let $p\in\RR[x,y,z]$ be a homogeneous polynomial  of degree $m$. Then $p$ is hyperbolic with respect to 
$e=(1,0,0)$  if and only if there exist two real symmetric $m \times m$ matrices $B$ and $C$  such that 
$$
p(x,y,z)=p(e)\det(xI-yB-zC).
$$
\end{theorem}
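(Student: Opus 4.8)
\emph{The ``if'' direction.} This implication is elementary and I would dispose of it first. Assume $p(x,y,z)=p(e)\det(xI-yB-zC)$ with $B,C$ real symmetric. Then $p(e)=p(e)\det I\neq 0$, and for every $\alpha=(\alpha_1,\alpha_2,\alpha_3)\in\RR^3$ the univariate polynomial $p(\alpha+te)=p(e)\det\bigl((\alpha_1+t)I-\alpha_2B-\alpha_3C\bigr)$ equals, up to the nonzero scalar $p(e)$, the characteristic polynomial of the real symmetric matrix $\alpha_2B+\alpha_3C$ in the variable $\alpha_1+t$, hence has only real zeros. So $p$ is hyperbolic with respect to $e$.

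\emph{Reduction to a smooth curve.} For the converse, normalize $p(e)=1$ and take $e=(1,0,0)$. I would first reduce to the case where $C:=V(p)\subseteq\PO^2(\CC)$ is a smooth curve. On one hand, such polynomials are dense among all degree-$m$ polynomials hyperbolic with respect to $e$: by a standard fact the ``strictly hyperbolic'' ones --- those for which every real line through $e$ meets $V(p)$ in $m$ \emph{distinct} real points --- are dense, and strict hyperbolicity forces $V(p)$ to be smooth (a singular point of $V(p)$ together with $e$ would span a line on which two roots collide). On the other hand, the reduction is legitimate because the representing matrices stay bounded in the limit: if $p_k\to p$ with $p_k=\det(xI-yB_k-zC_k)$, then for $(y_0,z_0)$ on the unit circle the eigenvalues of $y_0B_k+z_0C_k$ are exactly the roots of $t\mapsto p_k(t,-y_0,-z_0)$, which remain bounded because the coefficients of $p_k$ do; hence $\{(B_k,C_k)\}$ has a convergent subsequence, whose limit represents $p$.

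\emph{Determinantal representations and theta characteristics.} Now let $C$ be smooth of degree $m$, a compact Riemann surface of genus $g=\binom{m-1}{2}$ with the real structure given by complex conjugation and $C(\RR)$ its set of real points; recall that $K_C\cong\mathcal{O}_C(m-3)$ by adjunction. Here I would invoke the classical correspondence (Dixon; Vinnikov in the real case): symmetric determinantal representations $p=\det(xM_0+yM_1+zM_2)$, taken up to the equivalence $M_i\mapsto AM_iA^{T}$, are in bijection with line bundles $\mathcal{L}$ on $C$ such that $\mathcal{L}^{\otimes 2}\cong K_C$ (theta characteristics) and $H^0(C,\mathcal{L})=0$. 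Concretely, from such a representation the cokernel $\mathcal{F}$ of
\[
\mathcal{O}_{\PO^2}(-1)^{\oplus m}\xrightarrow{\;xM_0+yM_1+zM_2\;}\mathcal{O}_{\PO^2}^{\oplus m}
\]
is a line bundle on $C$ equal to $\mathcal{L}$ up to a fixed twist by some $\mathcal{O}_C(j)$, and conversely $\mathcal{L}$ reconstructs the $M_i$ up to equivalence.

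\emph{Reality, definiteness, and the main obstacle.} To finish one must choose $\mathcal{L}$ so that the $M_i$ can be taken \emph{real} and, decisively, so that $M_0$ is \emph{definite}: then, replacing $(M_0,M_1,M_2)$ by $(I,AM_1A^{T},AM_2A^{T})$ with $AA^{T}=M_0^{-1}$ and setting $B=-M_1$, $C=-M_2$, one gets $p=\det(xI-yB-zC)$ as required. This is precisely where hyperbolicity of $p$ with respect to the real point $e$ is exploited: it forces every real line through $e$ to meet $C$ in $m$ real points, so $C(\RR)$ consists of $\lfloor m/2\rfloor$ nested ovals (plus a one-sided pseudo-line when $m$ is odd) with $e$ interior to all of them, and in particular $C$ is a dividing (Type~I) real curve. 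For such curves Vinnikov's analysis singles out a distinguished real, non-effective theta characteristic $\mathcal{L}_0$ whose associated representation has $M_0$ positive definite. I expect this to be the crux of the whole argument: it requires the analytic machinery of real Riemann surfaces --- the component structure of $\mathrm{Pic}^{g-1}(C)(\RR)$, Riemann's theta function and his vanishing theorem to control when $H^0(\mathcal{L})=0$, and a delicate signature computation with theta functions with half-integer characteristics that uses the dividing property forced by hyperbolicity --- whereas the preceding reduction and the ``if'' direction are routine.
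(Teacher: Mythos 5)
The paper offers no proof of this statement: it is the Helton--Vinnikov theorem (in the homogeneous form in which Lewis--Parrilo--Ramana settled the Lax conjecture), quoted directly from \cite{HV,LPR}. So I can only judge your outline against the proof in those sources, whose strategy it does follow: trivial ``if'' direction, reduction to smooth curves plus a compactness argument, Dixon's correspondence between symmetric determinantal representations and ineffective theta characteristics, and Vinnikov's selection of a distinguished real theta characteristic on the dividing real curve that hyperbolicity forces. Two problems remain, one an outright error and one an essential omission.

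The error is in the reduction step. Strict hyperbolicity does \emph{not} force $V(p)$ to be smooth: your parenthetical argument only excludes \emph{real} singular points (those span a real line with $e$ on which two of the $m$ real roots collide), but says nothing about conjugate pairs of non-real singularities, for which the spanned line is not real and strict hyperbolicity imposes no constraint. Concretely, $p=(x^2-y^2-z^2)(4x^2-y^2-z^2)$ is strictly hyperbolic with respect to $(1,0,0)$ --- every real line through $e$ meets $V(p)$ in four distinct real points $\pm\sqrt{a^2+b^2},\ \pm\tfrac12\sqrt{a^2+b^2}$ --- yet $V(p)$ is singular at the non-real points $(0:1:\pm i)$ where the two conics meet. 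Density of smooth hyperbolic curves is still true, but needs a different argument: strictly hyperbolic polynomials form a nonempty open set in coefficient space whose closure contains all hyperbolic polynomials (Nuij), and a nonempty open set cannot lie inside the discriminant hypersurface, so its intersection with the complement of the discriminant is dense in it. (Your compactness argument for passing to the limit is fine, modulo the harmless sign slip: the eigenvalues of $y_0B_k+z_0C_k$ are the roots of $t\mapsto p_k(t,y_0,z_0)$.)

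The omission is the step you yourself identify as the crux: producing a real, ineffective theta characteristic whose associated symmetric pencil has $M_0$ definite. You correctly name the machinery involved --- the nested-oval structure of $C(\RR)$, the dividing property, the real components of the Jacobian, Riemann's vanishing theorem, and the signature computation with half-integer theta characteristics --- but you do not carry any of it out, and this is where essentially all of the difficulty of Helton--Vinnikov resides (it is precisely the part that resisted proof for decades and that \cite{LPR} import wholesale from \cite{HV}). As written, your text is an accurate road map of the known proof rather than a proof; to close it you would have to supply Vinnikov's construction of the distinguished theta characteristic and the verification of definiteness, or substitute one of the later alternative arguments for the definite determinantal representation.
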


Theorem \ref{lax} enables us to use the following well known convexity result in matrix theory due to K.~Fan.  

\begin{lemma}[\cite{KF}]\label{st-k-eig}
Let $A$ be a complex Hermitian matrix of size $n \times n$, and  denote by $\lambda_1(A)\le\cdots\le \lambda_n(A)$ its 
eigenvalues arranged in weakly increasing order. For each $1\le k\le n$ the function
$$
A\mapsto \sum_{i=1}^{k}\lambda_{n+1-i}(A)
$$
is convex on the real space of Hermitian $n\times n$ matrices.
\end{lemma}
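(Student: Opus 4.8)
The plan is to derive convexity from Ky Fan's extremal characterization of the partial sums of eigenvalues, which exhibits the function in question as a maximum of linear functions.

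First I would establish, for every Hermitian $n\times n$ matrix $A$, the identity
\[
\sum_{i=1}^{k}\lambda_{n+1-i}(A)=\max\bigl\{\operatorname{tr}(V^{*}AV)\ :\ V\in\CC^{n\times k},\ V^{*}V=I_{k}\bigr\}.
\]
The inequality ``$\ge$'' is immediate from the spectral theorem: picking an orthonormal eigenbasis $v_{1},\dots,v_{n}$ with $Av_{j}=\lambda_{j}(A)v_{j}$ and taking $V=[\,v_{n}\ v_{n-1}\ \cdots\ v_{n-k+1}\,]$ gives $\operatorname{tr}(V^{*}AV)=\sum_{i=1}^{k}\lambda_{n+1-i}(A)$. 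For ``$\le$'', given an admissible $V$ set $H:=V^{*}AV$, a Hermitian $k\times k$ matrix; by the Cauchy interlacing theorem its eigenvalues $\mu_{1}\le\cdots\le\mu_{k}$ satisfy $\mu_{j}\le\lambda_{n-k+j}(A)$, and summing over $j$ yields $\operatorname{tr}(H)\le\sum_{i=1}^{k}\lambda_{n+1-i}(A)$.

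Granting the identity, convexity is then purely formal: for each fixed $V$ the map $A\mapsto\operatorname{tr}(V^{*}AV)$ is real-linear on the space of Hermitian matrices, hence convex; and the pointwise maximum of a family of convex functions --- here a maximum over the compact Stiefel manifold $\{V:V^{*}V=I_{k}\}$, so the supremum is attained --- is again convex. This gives the lemma.

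The only real work is the upper bound in the extremal identity, i.e.\ the Cauchy interlacing step; this is the part I expect to be the main obstacle if interlacing is not taken for granted. An interlacing-free alternative: writing $\operatorname{tr}(V^{*}AV)=\operatorname{tr}(AP)$ with $P=VV^{*}$ an orthogonal projection of rank $k$, one has $\operatorname{tr}(AP)=\sum_{j}\lambda_{j}(A)c_{j}$ where $c_{j}=\|V^{*}v_{j}\|^{2}\in[0,1]$ and $\sum_{j}c_{j}=\operatorname{tr}(P)=k$; maximizing this linear functional over the polytope $\{c:0\le c_{j}\le1,\ \sum_{j}c_{j}=k\}$, whose vertices are the $0/1$ vectors with exactly $k$ ones, the maximum equals $\sum_{i=1}^{k}\lambda_{n+1-i}(A)$. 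Either route is short, and everything else in the argument is soft.
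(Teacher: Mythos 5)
The paper simply cites Ky Fan's 1949 paper and gives no proof of its own, so there is nothing internal to compare against. Your argument is correct and is essentially Ky Fan's original one: the sum of the $k$ largest eigenvalues is the pointwise maximum of the real-linear functionals $A\mapsto\operatorname{tr}(V^{*}AV)$ over the Stiefel manifold, hence convex; both of your routes to the extremal identity (Cauchy interlacing, or the doubly-substochastic/hypersimplex argument via $c_j=\|V^{*}v_j\|^2$) are sound and standard.
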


\begin{lemma}\label{st1}
Let $T:\RR_n[z]\to \RR[z]$ be a nondegenerate linear operator satisfying $T(\HH_n) \subseteq \HH_m$, where $n \geq 2$.  Let further $r(z) \in \HH_{n-2}$ be monic,  and $s$ be a fixed real number. For $t \in \RR$, let 
$x_1(t)\le\cdots\le x_m(t)$ be the zeros of the polynomial 
$T(r(z)((z+s)^2-t^2))$. Then for each $1\le k\le m$, 
\begin{equation}\label{k-f}
\RR\ni t\mapsto \sum_{i=1}^{k}x_{m+1-i}(t)
\end{equation}
is a convex and even function on $\RR$. Moreover, 
$$
T\Big(  r(z)((z+s)^2-t_1^2 )\Big)\pr T\Big(  r(z)((z+s)^2-t_2^2 )\Big),
$$ 
whenever $0\le t_1\le t_2$.

\end{lemma}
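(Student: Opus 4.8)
The plan is to reduce everything to Lemma~\ref{st-k-eig} via the Lax conjecture (Theorem~\ref{lax}). First I would homogenize: write $P(z,t) := T\bigl(r(z)((z+s)^2 - t^2)\bigr)$ and think of it as (a specialization of) a polynomial in the three variables $z,t$ together with a homogenizing variable $w$. The key point is that for each fixed real $t$ the polynomial $P(z,t)$ lies in $\HH_m$ (since $r(z)((z+s)^2-t^2)\in\HH_n$ and $T(\HH_n)\subseteq\HH_m$), and for $t$ in an open half-plane one wants genuine stability so as to invoke Theorem~\ref{lax}. So I would first argue that the bivariate polynomial $G(z,t) := P(z,t)$, suitably homogenized to a ternary form $\widetilde G(x,y,w)$ of degree $m$, is hyperbolic with respect to $e=(1,0,0)$: this follows because plugging in real $t$ and letting $z$ run gives only real zeros, and $t\mapsto((z+s)^2-t^2)$ sweeps out a stable/hyperbolic family, so the standard stability-preservation argument (Hurwitz plus the fact that $q(z)(z+s+it)(z+s-it)$ has the right location of zeros, hence $T$ applied to it is stable after a change of variables $t\mapsto it$) shows $\widetilde G$ is a real hyperbolic ternary form.

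Then Theorem~\ref{lax} produces real symmetric $m\times m$ matrices $B,C$ with $\widetilde G(x,y,w) = \widetilde G(e)\det(xI - yB - wC)$; specializing the homogenizing variable recovers $P(z,t) = c\det\bigl(zI - B - t\,C\bigr)$ up to a nonzero constant $c$ (here I would need to track which variable $t$ couples to — the quadratic dependence on $t$ in $(z+s)^2-t^2$ must be reconciled with the linear pencil, and this is the one slightly delicate bookkeeping point; the resolution is that $(z+s)^2 - t^2 = (z+s-t)(z+s+t)$ is linear in $t$, so after absorbing $s$ there really is only linear $t$-dependence and the pencil is $zI - A - tC$ for a fixed symmetric $A$). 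Consequently the zeros $x_1(t)\le\cdots\le x_m(t)$ of $P(z,t)$ are exactly the eigenvalues $\lambda_1(A+tC)\le\cdots\le\lambda_m(A+tC)$ of the Hermitian (indeed real symmetric) matrix $A + tC$, which is an affine function of $t$.

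Now the convexity and evenness in \eqref{k-f} are immediate. Evenness: $r(z)((z+s)^2-t^2)$ depends on $t$ only through $t^2$, so $P(z,t)=P(z,-t)$ and hence $x_i(t)=x_i(-t)$. Convexity: $t\mapsto A+tC$ is affine, and by Lemma~\ref{st-k-eig} the map $H\mapsto\sum_{i=1}^k\lambda_{m+1-i}(H)$ is convex on Hermitian matrices, so its composition with an affine map of $t$ is convex. This gives the first assertion. For the majorization statement, fix $0\le t_1\le t_2$. The polynomials $T(r(z)((z+s)^2-t_j^2))$ have the same leading coefficient (both equal the leading coefficient of $T(z^n)$ up to the fixed factor coming from $r$, since $T$ is nondegenerate with $\deg$ behaviour as in Remark~\ref{nd}, so $\deg = m$ for both), and they have the same sum of zeros (the coefficient of $z^{m-1}$ is an affine functional of $t^2$ applied to... — more simply, $\operatorname{tr}(A+t_1C)$ versus $\operatorname{tr}(A+t_2C)$: one must check the coefficient of $z^{m-1}$ is independent of $t$, which follows because the $z^{n-1}$-coefficient of $r(z)((z+s)^2-t^2)$ is independent of $t$ — indeed only the $z^{n-2}$ and lower coefficients see $t^2$ — and $p\mapsto(\text{coeff of }z^{m-1}\text{ in }T(p))$ combined with the constant-leading-coefficient property forces the sum of zeros to be fixed). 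Finally, for each $0\le k\le m-2$ we need $\sum_{i=1}^{k+1}x_{m+1-i}(t_1)\le\sum_{i=1}^{k+1}x_{m+1-i}(t_2)$, i.e. that \eqref{k-f} is nondecreasing on $[0,\infty)$; but a convex even function on $\RR$ is automatically nondecreasing on $[0,\infty)$, so this holds. Hence $T(r(z)((z+s)^2-t_1^2))\pr T(r(z)((z+s)^2-t_2^2))$.

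The main obstacle I anticipate is the first step: rigorously establishing that the relevant ternary form is hyperbolic with respect to $e=(1,0,0)$ so that Theorem~\ref{lax} applies, and in particular handling the homogenization and the change of variables $t\mapsto it$ cleanly (the family $z\mapsto r(z)((z+s)^2-t^2)$ is hyperbolic for real $t$ but one genuinely needs a statement valid for complex $t$ in a half-plane to land in the hypothesis of Theorem~\ref{alg}/stability — this is where the structure of $T$ as a stability preserver, i.e. $F_T(z,w)$ or $F_T(z,-w)$ stable, gets used). Everything after the appearance of the linear pencil $A+tC$ is soft and follows from K.~Fan's lemma plus the elementary fact that convex even functions are monotone on a half-line.
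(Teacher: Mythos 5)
Your high-level strategy matches the paper's exactly: homogenize to a ternary form, invoke Theorem~\ref{lax} to obtain a linear symmetric pencil, then apply Lemma~\ref{st-k-eig} together with the elementary fact that a convex even function on $\RR$ is nondecreasing on $[0,\infty)$. Everything you write from the appearance of the pencil onward is correct. Also, your worry about "reconciling quadratic $t$-dependence with a linear pencil" is a non-issue: once hyperbolicity of the ternary form is known, Theorem~\ref{lax} automatically supplies a \emph{linear} pencil, no bookkeeping needed. (And the proffered resolution, that $(z+s-t)(z+s+t)$ is linear in $t$, is not a valid explanation, since $T$ does not commute with polynomial multiplication.)

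The genuine gap is the step you yourself flag: proving that the relevant ternary form is hyperbolic with respect to $(1,0,0)$. Your sketch heads toward a stability argument (change of variables $t\mapsto it$, stability of $F_T$), but this is not carried out, and it would not go through directly: the polynomial $r(z)((z+s)^2-t^2)=r(z)(z+s-t)(z+s+t)$ contains the factor $z+s-t$, which is not stable as a bivariate polynomial in $(z,t)$, so one cannot simply push stability through $T$. The paper verifies hyperbolicity head-on. Set $g=T(r(z)(z+s)^2)$ and $h=T(r(z))$ (first disposing of the trivial case $h\equiv 0$; note $\deg h=m-2$ by Remark~\ref{nd}), homogenize $g(z_1)-z_2^2 h(z_1)$ to a degree-$m$ form $f(z_1,z_2,z_3)$, and check that $f(\alpha+te)$ has only real zeros for every $\alpha=(a,b,c)\in\RR^3$ by splitting into two cases. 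For $c\neq 0$, after rescaling, $f(\alpha+te)$ is a constant multiple of $T\bigl(r(z)((z+s)^2-b^2/c^2)\bigr)$ evaluated along a real line, hence in $\HH_m$ by hypothesis. For $c=0$, $f(\alpha+te)$ collapses to $\gamma(a+t)^m-b^2\gamma'(a+t)^{m-2}$ with $\gamma,\gamma'$ the leading coefficients of $g,h$; the reality of its zeros requires $\gamma\gamma'>0$, which is precisely the sign-constancy/alternation of the sequence $\{\gamma_k(T)\}$ from Remark~\ref{multsign}. This $c=0$ boundary case, and the appeal to Remark~\ref{multsign} that it requires, are absent from your proposal; without them hyperbolicity of the ternary form is unproved and Theorem~\ref{lax} cannot be invoked.
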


\begin{proof}
Set  
$g(z)=T(r(z)(z+s)^2)$, $h(z)=T(r(z))$, and $m=\deg g$. If $h\equiv 0$ there is nothing to 
prove so we may assume that $\deg h \ge 0$. Then 
$\deg h=m-2$ by Remark \ref{nd}. We claim that the homogeneous 
degree $m$ polynomial in 
three variables  
$$
f(z_1,z_2,z_3)=z_3^mg(z_1/z_3)-z_2^2z_3^{m-2}h(z_1/z_3)
$$
is hyperbolic with respect to the vector $e=(1,0,0)$. If $\alpha = (a,b,0)$, then 
$$
f(\alpha+et)= \gamma_m(T)(a+t)^m-b^2\gamma_{m-2}(T)(a+t)^{m-2}
$$
has only real zeros since, by Remark \ref{multsign}, $\gamma_m(T)\gamma_{m-2}(T)>0$. Also, if $\alpha = (a,b,c)$ where $c \neq 0$, then 
$$
f(\alpha+et)= c^m T\Big(r(z)(z^2-b^2/c^2)\Big)\Big|_{z=(a+t)/c}
$$
has only real zeros, and the claim follows.

By Theorem \ref{lax} 
there exist real symmetric $m\times m$ matrices $B$ and $C$ such that
$$
f(z_1,z_2,z_3)=f(e)\det(z_1I-z_2B-z_3C).
$$
It follows that for any fixed $t\in\RR$ the zeros of the polynomial
$$
T\Big(  r(z)((z+s)^2-t^2 )\Big)=f(z,t,1)=g(z)-t^2h(z)
$$
are precisely the eigenvalues of the real symmetric matrix $tB+C$. Note also that $\sum_{i=1}^m x_i(t)$ is constant in $t$, since the two top coefficients of $f(z,t,1)$ come from $g(z)$.  The lemma 
now follows from Lemma \ref{st-k-eig}.
\end{proof}

To complete the proof of the sufficiency of (1) in Theorem \ref{main} we need 
a well-known lemma  due to Hardy, Littlewood and P\'olya, see \cite{MO}. 
For simplicity, we formulate it by means of polynomials in 
$\HH_n$. Given $p,q\in\HH_n$  with $n \geq 2$, 
$\cZ(p)=(x_{1}, \ldots, x_{n})$ and 
$\cZ(q)=(y_{1},  \ldots, y_{n})$ we say that $p$ is a 
{\em pinch} of $q$ if there exist $1\le i\le n-1$ and 
$0\le t\le (y_{i+1}-y_i)/2$ such that $x_i=y_i+t$, $x_{i+1}=y_{i+1}-t$, and 
$x_k=y_k$ for $k\neq i$. Note that if $p$ is a pinch of $q$, then we may write $p$ and $q$ as 
$p(z)=r(z)((z+s)^2-t_1^2)$ and $q(z)=r(z)((z+s)^2-t_2^2)$, where $r$ is a hyperbolic polynomials and $s,t_1, t_2 \in \RR$ with $0 \leq t_1\leq t_2$.  

\begin{lemma}\label{pinch}
If $p,q\in\HH_n$, $n \geq 2$, are such that $p\pr q$, then $p$ may be obtained from 
$q$ by a finite number of pinches.
\end{lemma}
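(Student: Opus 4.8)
The plan is to deduce the lemma from the classical Hardy--Littlewood--P\'olya description of majorization as the partial order generated by transfers that pull two zeros toward one another; in the language of the lemma, that description \emph{is} the statement. Passing to zero vectors, write $\cZ(p)=(x_1,\dots,x_n)$ and $\cZ(q)=(y_1,\dots,y_n)$, both weakly increasing, and (since $p\pr q$ forces equal leading coefficients) assume $p,q$ monic. I would argue by induction on $d(x,y):=\#\{i:x_i\neq y_i\}$. If $d(x,y)=0$ then $p=q$ and there is nothing to prove.

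Suppose $d(x,y)\ge 1$. Since $\sum x_i=\sum y_i$ and $x\neq y$, there are indices with $x_i>y_i$ and indices with $x_i<y_i$. Let $\ell$ be the largest index with $x_\ell>y_\ell$; then $x_m\le y_m$ for all $m>\ell$ by maximality, and $\ell\le n-1$ since the top-entry majorization inequality gives $x_n\le y_n$. Let $k$ be the smallest index $>\ell$ with $x_k<y_k$: it exists because the largest index at which $x$ and $y$ disagree has $x<y$ there (by the majorization inequalities applied after discarding the agreeing tail) and lies above $\ell$. By extremality of $\ell$ and $k$ one has $x_m=y_m$ for $\ell<m<k$, and, $x$ being weakly increasing, $y_\ell<x_\ell\le x_k<y_k$; in particular $y_\ell<y_{\ell+1}$ and $y_{k-1}<y_k$. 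Put $t:=\min(x_\ell-y_\ell,\;y_k-x_k)>0$ and let $y'$ agree with $y$ except that $y'_\ell=y_\ell+t$ and $y'_k=y_k-t$. Then $y'$ is again weakly increasing --- from $y_\ell+t\le x_\ell$ (which is $\le y_{\ell+1}$ if $\ell+1<k$) and $y_k-t\ge x_k$ (which is $\ge y_{k-1}$ if $k-1>\ell$), the case $\ell+1=k$ being covered by $y_\ell+t\le x_\ell\le x_k\le y_k-t$; moreover $d(x,y')<d(x,y)$, since $t$ equals one of the two gaps so that coordinate $\ell$ or $k$ of $y'$ now matches $x$ while no coordinate that matched before is disturbed. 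The one computation is $x\pc y'$: $y'$ has the same total as $y$, and its top partial sums differ from those of $y$ only for the indices $m$ with $\ell<m\le k$, where each drops by exactly $t$; for such $m$, using $x_m=y_m$ on $\ell<m<k$, the required inequality $\sum_{i\ge m}x_i\le\sum_{i\ge m}y_i-t$ reduces to $\sum_{i\ge k}x_i\le\sum_{i\ge k}y_i-t$, which follows from $x_k\le y_k-t$ and $\sum_{i>k}x_i\le\sum_{i>k}y_i$.

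Granting this, the monic polynomial $q'$ with $\cZ(q')=y'$ is a pinch of $q$: the pair of zeros moved is $y_\ell\le y_k$, and since all other zeros are left fixed one factors $q'(z)=r(z)\bigl((z+s)^2-t_1^2\bigr)$ and $q(z)=r(z)\bigl((z+s)^2-t_2^2\bigr)$ exactly as in the remark preceding the lemma. Since also $p\pc q'$ and $d(x,y')<d(x,y)$, by the induction hypothesis $p$ is obtained from $q'$ by finitely many pinches, and prepending the pinch $q\rightsquigarrow q'$ closes the induction.

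The point to be careful about is the finiteness of the scheme together with the precise notion of an elementary move: if one insists that a pinch act on two zeros that are \emph{consecutive} in the sorted order, then the reduction above must be combined with the standard fact that a single Hardy--Littlewood--P\'olya transfer on a sorted tuple is a finite composition of consecutive ones. I would either supply this by a short extra induction (on $k-\ell$, splitting off a consecutive transfer at whichever end has room, inserting auxiliary consecutive transfers first if necessary), or simply invoke \cite{MO}, to which the lemma is attributed; this, rather than the bookkeeping above, is where the real work sits.
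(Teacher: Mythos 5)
The paper does not actually prove this lemma: it is stated as a classical result of Hardy--Littlewood--P\'olya and attributed to \cite{MO}. Your proposal therefore cannot be compared to a proof in the paper; instead you supply a self-contained inductive argument, and its core is sound. The choice of $\ell$ (largest index with $x_\ell>y_\ell$) and $k$ (smallest index $>\ell$ with $x_k<y_k$), the existence argument for $k$, the amount $t=\min(x_\ell-y_\ell,\,y_k-x_k)$, the verification that $y'$ stays weakly increasing, the check that $x\pc y'$ (reducing to $\Sigma_k(x)\le\Sigma_k(y)-t$, which follows from $x_k\le y_k-t$ and the tail inequality), and the strict decrease of $d(x,y)$ are all correct.

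The gap is the one you flag yourself: the paper defines a \emph{pinch} to act on two \emph{consecutive} entries $i,i+1$ of the sorted zero vector, while your transfer acts on positions $\ell<k$ that may be far apart. Your sentence ``the monic polynomial $q'$ is a pinch of $q$'' is therefore not literally true as the notion is defined, and the proposed repair --- a short induction on $k-\ell$ ``splitting off a consecutive transfer at whichever end has room'' --- does not go through as written: the pinch constraint $t'\le (y_{i+1}-y_i)/2$ at either end can fail, and once a pinch equalises two neighbouring entries no further consecutive pinch there can move mass, so the naive peeling stalls. Making the consecutive version rigorous requires a genuinely different choice of elementary move (and a different termination argument, since the coordinate-difference count $d$ can increase along a chain of consecutive pinches). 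That said, two mitigating observations are worth recording. First, the factorisation $q(z)=r(z)\bigl((z+s)^2-t_2^2\bigr)$, $q'(z)=r(z)\bigl((z+s)^2-t_1^2\bigr)$ with $0\le t_1\le t_2$ holds for your non-consecutive transfers exactly as for consecutive pinches (only the sum $y_\ell+y_k$ and the gap $y_k-y_\ell$ enter), so your decomposition is a perfectly adequate substitute in the downstream application via Lemma~\ref{st1}. Second, the paper itself sidesteps the issue by simply citing \cite{MO}, which is also the option you ultimately fall back on; so the ``real work'' you rightly locate here is not done in the paper either.
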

Suppose now that $p \pr q \in \HH_n$ where $n \geq 2$ and that $T$ is as in (1) of Theorem \ref{main}. By Lemma \ref{pinch} there are polynomials $p=p_0, p_1,\ldots, p_k=q$ in $\HH_n$ such that $p_{i-1}$ is a pinch of $p_i$ for all $1 \leq i \leq k$.  By Lemma \ref{st1}, $T(p_{i-1}) \pr T(p_{i})$  for all $1 \leq i \leq k$ so by transitivity $T(p) \pr T(q)$.    The case when $n=1$ follows from the case when $n=2$ by considering 
the map $T'$ defined by $T'(f)=T(f')$. 

To prove the remaining direction in Theorem  \ref{main} assume that $T$ preserves majorization.  If $\deg(T(z^n))> \deg(T(z^{n-1}))$, then by Lemma \ref{degrees},  $\deg(T(p))=\deg(T(q))$ for any two polynomials $p,q$ of degree $n$. In particular $T(\HH_n) \subseteq \HH_m$ for some $m$. Assume that $\deg(T(z^n))\leq  \deg(T(z^{n-1}))$. Recall that $\deg(T(p))$ and the top two coefficients of $T(p)$ only depend on the top two coefficients of $p$. This can only happen if $\deg(T(z^{n-2})) \leq \deg(T(z^{n-1}))-2$, since otherwise the top two coefficients of $T(z^n-a^2z^{n-2})$ would depend on the real parameter $a$. 
But then, by Lemma \ref{degrees},  $T(1) \equiv \cdots \equiv T(z^{n-2}) \equiv 0$ and $T$ is thus degenerate contrary to the assumptions. 

The final sentence in Theorem \ref{main} follows from Lemma \ref{degrees} and  Theorem \ref{alg}.

\end{document}